\newcommand{\amdq}{Dell PowerEdge R415 servers with Dual 2.8GHz AMD 6-Core Opteron 4184 CPUs and 64GB of memory}
\newtheorem{theorem}{Theorem}
\newtheorem{remark}{Remark}
\newtheorem*{remark*}{Remark}
\newtheorem{definition}{Definition}
\begin{document}
%
\title{Polynomial SDP Cuts for Optimal Power Flow}

 \author{\IEEEauthorblockN{Hassan Hijazi\IEEEauthorrefmark{1}\IEEEauthorrefmark{2},
Carleton Coffrin\IEEEauthorrefmark{2} and
Pascal Van Hentenryck\IEEEauthorrefmark{2}\IEEEauthorrefmark{3}}
 \IEEEauthorblockA{\IEEEauthorrefmark{1} The Australian National University, Canberra, Australia \{hassan.hijazi@anu.edu.au\}}
 \IEEEauthorblockA{\IEEEauthorrefmark{2} National ICT Australia\\
}
 \IEEEauthorblockA{\IEEEauthorrefmark{3} University of Michigan, Ann Arbor, MI, USA.\\
}

}

\maketitle

\begin{abstract}
  The use of convex relaxations has lately gained considerable
  interest in Power Systems.  These relaxations play a major role in
  providing quality guarantees for non-convex optimization problems.
  For the Optimal Power Flow (OPF) problem, the semidefinite
  programming (SDP) relaxation is known to produce tight lower bounds.
  Unfortunately, SDP solvers still suffer from a lack of scalability.
  In this work, we introduce an exact reformulation of the SDP relaxation, 
  formed by a set of polynomial constraints defined in the space of real variables. 
  The new constraints can be seen as ``cuts", strengthening weaker second-order cone relaxations, 
  and can be generated in a lazy iterative fashion.
  The new formulation can be handled by standard nonlinear programming
  solvers, enjoying better stability and computational efficiency.
  This new approach benefits from recent results on
  tree-decomposition methods, reducing the dimension of the underlying
  SDP matrices. As a side result, we present a formulation of
  Kirchhoff's Voltage Law in the SDP space and reveal the existing
  link between these cycle constraints and the original SDP relaxation
  for three dimensional matrices.  Preliminary results show a
  significant gain in computational efficiency compared to a standard
  SDP solver approach.
\end{abstract}

\begin{IEEEkeywords}
OPF, SDP relaxation, SOCP relaxation, Polynomial Constraints, Nonlinear Programming.
\end{IEEEkeywords}


\section*{Nomenclature}
\addcontentsline{toc}{section}{Nomenclature}
\begin{IEEEdescription}[\IEEEusemathlabelsep\IEEEsetlabelwidth{$\underline{\bm p}^g_{i}, \overline{\bm p}^g_{i}$}]
\item [{$N$}] The set of nodes in the network.
\item [{$E$}] The set of directed edges in the network.
\item [{$E^r$}] The set of edges in $E$ with reversed direction.
\item [{$\bm i$}] Imaginary number constant.
\item[$V_i$] Complex voltage at node $i$.
\item[$v_i$] Voltage magnitude at node $i$.
\item[$w_i$] Voltage magnitude squared at node $i$.
\item [$\theta_{i}$] Phase angle at node $i$.
\item [$\theta_{ij}$] Phase angle difference along line $(i,j)$.
\item [$S_{ij}$] Complex power flow along line $(i,j)$.
\item [$p_{ij},q_{ij}$] Active/reactive power flow along line $(i,j)$.
\item [$p^g_{i},q^g_{i}$] Active/reactive power generation at node $i$.
\item [${\bm r_{ij}}, {\bm x_{ij}}$] Line resistance/reactance along line $(i,j)$.
\item [${\bm t_{ij}}$] Thermal limit along line $(i,j)$.
\item [$\underline{\bm \theta}_{ij}, \overline{\bm \theta}_{ij}$] Angle difference along line $(i,j)$.
\item [$\underline{\bm v}_{i}, \overline{\bm v}_{i}$] Voltage magnitude bounds at node $i$.
\item [$\underline{\bm p}^g_{i}, \overline{\bm p}^g_{i}$] Active power generation bounds at node $i$.
\item [$\underline{\bm q}^g_{i}, \overline{\bm q}^g_{i}$] Reactive power generation bounds at node $i$.
\item [$\bm c_{i}, \bm c^{'}_{i}$] Generation coefficient costs at node $i$.
\item [${\bm p^d_{i}}, {\bm q}^d_{i}$] Active/reactive power demand at node $i$.
\end{IEEEdescription}

\section{Introduction}
%
%
%
%

Power systems operations, design and planning problems rely on the set
of non-convex AC power flow equations.  Solution quality guarantees
have driven researchers to derive convex relaxations for various
problems in power systems such as the Optimal Power Flow (OPF)
\cite{Jabr_06}-\cite{Hij_QC_14}.
Several results focus on demonstrating the tightness of these
relaxations under various assumptions. For the OPF problem, the semidefinite
  programming (SDP) relaxation is known to produce tight lower bounds.
  Unfortunately, SDP solvers still suffer from a lack of scalability.
  In this work, we introduce an exact reformulation of the SDP relaxation, 
  formed by a set of polynomial constraints defined in the space of real variables. 
  The new constraints can be seen as ``cuts", strengthening weaker second-order cone relaxations.
  
The rest of the paper is organized as follows. 
Section \ref{sec:rel} introduces the second-order cone and the SDP relaxations for the OPF problem. Section \ref{sec:sdp} presents the new polynomial SDP cuts and Section \ref{sec:cycle} investigates an original representation of cycle constraints and their link with the SDP relaxation. Finally, some preliminary results are presented in Section \ref{sec:numerical}.

\section{Power Flow Models and Relaxations} \label{sec:rel}

This section reviews the AC power flow equations and some of their 
relaxations. All power flow models share a set of operational bounds
constraints, i.e., 
\begin{align}
&p^2_{ij} + q^2_{ij} \leq {\bm t_{ij}},\forall (i,j),(j,i) \in E,\label{eq:op:capacity}\\
&\underline{\bm p}^g_{i} \leq p^g_{i} \leq \overline{\bm p}^g_{i}, \forall i\in N, \label{eq:op:pgen} \\
&\underline{\bm q}^g_{i} \leq q^g_{i} \leq \overline{\bm q}^g_{i}, \forall i\in N, \label{eq:op:qgen}\\
&\underline{\bm v}_{i} \leq v_{i} \leq \overline{\bm v}_{i}, \forall i\in N, \label{eq:op:volt}\\
&\underline{\bm \theta}_{ij} \leq \theta_{ij} \leq \overline{\bm \theta}_{ij}, \forall (i,j) \in E, \label{eq:op:diff}
\end{align}
Kirchhoff's Current Law, i.e.,
\begin{align}
& p^g_i - {\bm p^d_i}   = \sum_{\substack{(i,j)\in E}} p_{ij}, \forall i\in N, \label{eq:pcons}\\ 
& q^g_i - {\bm q^d_i} = \sum_{\substack{(i,j)\in E}} q_{ij}, \forall i\in N. \label{eq:qcons}
\end{align}
and the power equations, i.e.,
%
\begin{align}
&p_{ij} = \boldsymbol g_{ij} v_i^2  -  \boldsymbol g_{ij} {v_i}{v_j}\cos(\theta_{ij}) -  \boldsymbol b_{ij} {v_i}{v_j}\sin(\theta_{ij}) \label{eq:ac:p}\\
&q_{ij} = - \boldsymbol b_{ij} v_i^2  +  \boldsymbol b_{ij} {v_i}{v_j}\cos(\theta_{ij}) -  \boldsymbol g_{ij} {v_i}{v_j}\sin(\theta_{ij})\label{eq:ac:q}
\end{align}
where ${\bm g}  = {\bm r}/({\bm r}^2 + {\bm x}^2)$ and ${\bm b}  = -{\bm x}/({\bm r}^2 + {\bm x}^2)$.

Using complex numbers, one can derive a compact representation of
\eqref{eq:ac:p}-\eqref{eq:ac:q} Let $S_{ij} = p_{ij} + \bm i q_{ij}$,
$V_i = v_i (\cos(\theta_i) + \bm i \sin(\theta_i))$, and $\boldsymbol
Y_{ij} = \boldsymbol g_{ij} + \bm i \boldsymbol b_{ij}$. Equations
\eqref{eq:ac:p}-\eqref{eq:ac:q} can be written:
\begin{align}
&S_{ij} = \boldsymbol Y^*_{ij} (V_iV_i^* - V_iV_j^*). \label{eq:ac:S}
\end{align}
Consider the complex variable product 
$$V_iV_j^* = {v_i}{v_j}(\cos(\theta_{ij}) + i\sin(\theta_{ij}))$$
and let 
\begin{equation}\label{eq:lift}
W_{ij} = {w}^R_{ij} + \bm i {w}^I_{ij} = V_iV_j^*
\end{equation}
where
\begin{align}
& {w}^R_{ij} = {v_i}{v_j}\cos(\theta_{ij}) \\
& {w}^I_{ij} = {v_i}{v_j}\sin(\theta_{ij})
\end{align}
Note that  $W_{ji} =  W_{ij}^* = {w}^R_{ij} - \bm i {w}^I_{ij}$ and $W_{ii} = v_i^2$.

In a similar fashion, let
\begin{equation}\label{eq:lift1}
w_{i} = v_i^2
\end{equation}
Given these variable substitutions, equations \eqref{eq:ac:p}-\eqref{eq:ac:q} become linear:
\begin{align}
&p_{ij} = \boldsymbol g_{ij} {w}_{i}  -  \boldsymbol g_{ij} {w}^R_{ij} -  \boldsymbol b_{ij} {w}^I_{ij}  \label{eq:op:pflow:w} \\
&q_{ij} = - \boldsymbol b_{ij} {w}_{i} +  \boldsymbol b_{ij} {w}^R_{ij} -  \boldsymbol g_{ij} {w}^I_{ij} \label{eq:op:qflow:w}
\end{align}
Consider the $|N| \times |N|$ Hermitian matrix defined as: 
\begin{equation}\label{hermit}
{W} = \left\{
\begingroup
\begin{array}{l@{}l}
{W}_{ii} =  w_i,~ \forall i \in N,\\[1mm]
{W}_{ij} = {w}^R_{ij} + \bm i {w}^I_{ij},~ \forall (i,j) \in E,\\[1mm]
{W}_{ji} = {w}^R_{ij} - \bm i {w}^I_{ij},~ \forall (i,j) \in E,\\[1mm]
{W}_{ij} = 0,~\forall (i,j) \notin E.
\end{array}
\endgroup
\right\}
\end{equation}
${W}$ is characterized by the following constraints:
 \begin{align}
{W} & \succeq 0 \label{eq:sdp}\\
\mbox{rank}({W}) & = 1\label{eq:rank}
\end{align}

\subsection{The SDP Relaxation}

Given a convex objective function, the semidefinite programming
relaxation outlined in Model \ref{mod:sdp} is obtained by discarding
the non-convex rank constraints \eqref{eq:rank}. Note that the voltage and phase angle bound constraints \eqref{eq:op:volt} and \eqref{eq:op:diff} can be represented in the $W$-space as follows,
\begin{align}
&\underline{\bm v}_{i}^2 \leq w_{i} \leq \overline{\bm v}_{i}^2, \forall i\in N,\label{eq:wbound}\\
&tan(\underline{\bm \theta}_{ij})w^R_{ij} \le w^I_{ij} \le tan(\overline{\bm \theta}_{ij})w^R_{ij},~ \forall (i,j) \in E.\label{eq:tbound}
\end{align}¥

\begin{model}
\small
\caption{The SDP Relaxation \label{mod:sdp}}

\addtolength{\jot}{0.2em}
\begingroup
\begin{align*}
\min~ & \sum_{i \in N} {\bm c}_ip^g_i + {\bm c}^{'}_i\left(p^g_i\right)^2\\
&p_{ij} = \boldsymbol g_{ij} {w}_{i}  -  \boldsymbol g_{ij} {w}^R_{ij} -  \boldsymbol b_{ij} {w}^I_{ij},~ \forall (i,j) \in E,\\
&q_{ij} = - \boldsymbol b_{ij} {w}_{i} +  \boldsymbol b_{ij} {w}^R_{ij} -  \boldsymbol g_{ij} {w}^I_{ij},~ \forall (i,j) \in E,\\
&{W} \succeq 0,\\
&(\ref{eq:op:capacity})-(\ref{eq:op:qgen}),~(\ref{eq:pcons})-(\ref{eq:qcons}),~(\ref{eq:wbound})-(\ref{eq:tbound}).
\end{align*}
\endgroup
\end{model}


\subsection{The SOCP Relaxation}

Sojoudi and Lavaei \cite{Sojoudi_12} observed that the SDP model can
be further relaxed by posting the positive semidefinite constraints
on the $2 \times 2$ sub-matrices of W related to each line in the
network.
\begin{align}\label{mat:socp}
 \begin{bmatrix} 
{W}_{ii} &  {W}_{ij}\\ 
{W}_{ji} &  {W}_{jj}
\end{bmatrix}
\succeq 0,  ~ \forall (i,j) \in E
\end{align}
Following the characterization of a positive semidefinite matrix based on the properties of its principal minors, each constraint $(i,j) \in E$ from \eqref{mat:socp} is equivalent to the following set of second-order cone constraints:
\begin{align}
& {w}_{i} \ge 0,~{w}_{j} \ge 0, \label{eq:socp:pos}\\
& {w}_{i}{w}_{j} \geq  \left({w}^R_{ij}\right)^2 + \left({w}_{ij}^I\right)^2. \label{eq:socp:det}
\end{align}
This formulation was originally proposed by Jabr in \cite{Jabr_06}.
On acyclic networks, the systems of Equations
\eqref{eq:socp:pos}--\eqref{eq:socp:det} for all $(i,j) \in E$ is
strictly equivalent to constraint \eqref{eq:sdp} \cite{Sojoudi_12}.
In the presence of cycles, this relaxation can be weak, since the SOCP
relaxation can be thought of as the introducion {\em virtual phase
  shifters} \cite{Sojoudi_12} in the network to counteract the effects
of Ohm's Law in cycles.

\section{SDP-Determinant Cuts}
\label{sec:sdp}

This section defines SDP-Determinant cuts and an alternative
formulation of Model \ref{mod:sdp}.

\subsection{SDP Principal Minors Characterization}

A necessary and sufficient condition for a symmetric $n \times n$
matrix to be positive semidefinite is that all its principal minors
are positive \cite{Prussing86}.  A principal minor is the determinant
of the submatrix formed by deleting the $n-k$ rows and the corresponding $n-k$ columns
($1 \leq k \leq n$) from the original
matrix. For a given $k$, there are $n \choose k$ such submatrices
where $n \choose k$ denotes the binomial coefficient
$\frac{n!}{(n-k)!k!}$. Hence the total number of principal minors is
given by
$$m = \sum\limits_{k=1}^{n} {n \choose k} = 2^n - 1.$$ 
For a $3 \times 3$ matrix, there are $2^3 - 1 = 7$ submatrices to
consider, six of which correspond to constraints \eqref{eq:socp:pos}
and \eqref{eq:socp:det}. As a result, the only additional constraint
imposes that the determinant of the full matrix must be positive.

\subsection{SDP Determinant Cuts}

Based on the principal minors characterization, the SDP constraint
\eqref{eq:sdp} is equivalent to the set of polynomial
constraints derived from the $m$ principal minors:
 \begin{equation}\label{eq:det}
 \det \left({W}_i\right) \ge 0,~\forall i \in \{1,\dots,2^n-1\}
\end{equation}¥
where ${W}_i$ is the $i$-th submatrix that corresponds to a principal minor. 
Model \ref{mod:sdp} is thus equivalent to 

\begin{model}
\small
\caption{The SDP Determinant Cut Relaxation \label{mod:sdpc}}

\addtolength{\jot}{0.2em}
\begingroup
\begin{align*}
\min~ & \sum_{i \in N} {\bm c}_ip^g_i + {\bm c}^{'}_i\left(p^g_i\right)^2\\
&p_{ij} = \boldsymbol g_{ij} {w}_{i}  -  \boldsymbol g_{ij} {w}^R_{ij} -  \boldsymbol b_{ij} {w}^I_{ij},\\
&q_{ij} = - \boldsymbol b_{ij} {w}_{i} +  \boldsymbol b_{ij} {w}^R_{ij} -  \boldsymbol g_{ij} {w}^I_{ij},\\
& \det \left({W}_i\right) \ge 0,~\forall i \in \{1,\dots,2^n-1\} \\
&(\ref{eq:op:capacity})-(\ref{eq:op:qgen}),(\ref{eq:pcons})-(\ref{eq:qcons}),~(\ref{eq:wbound})-(\ref{eq:tbound}).
\end{align*}
\endgroup
\end{model}
\begin{theorem}\label{th:eq}
Model \ref{mod:sdp} is strictly equivalent to Model \ref{mod:sdpc}.
\end{theorem}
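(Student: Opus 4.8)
The plan is to prove the equality of the two feasible regions; equality of optimal values is then immediate because Models \ref{mod:sdp} and \ref{mod:sdpc} share the same objective $\sum_{i\in N}\bm c_i p^g_i + \bm c'_i (p^g_i)^2$. Observe first that the two models are written over exactly the same decision variables — the power flows $p_{ij},q_{ij}$, the generations $p^g_i,q^g_i$, and the lifted quantities $w_i, w^R_{ij}, w^I_{ij}$ — and that they impose term-for-term the same operational bounds \eqref{eq:op:capacity}--\eqref{eq:op:qgen}, the same Kirchhoff Current Law \eqref{eq:pcons}--\eqref{eq:qcons}, the same linearized power equations \eqref{eq:op:pflow:w}--\eqref{eq:op:qflow:w}, and the same lifted voltage/angle bounds \eqref{eq:wbound}--\eqref{eq:tbound}. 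Consequently the only thing to verify is that, for a given assignment of the lifted variables building the Hermitian matrix $W$ of \eqref{hermit}, the constraint $W \succeq 0$ of \eqref{eq:sdp} holds if and only if the polynomial system \eqref{eq:det} holds.

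For this step I would invoke the principal minors characterization recalled in the preceding subsection: a Hermitian $n\times n$ matrix is positive semidefinite if and only if all of its $2^n-1$ principal minors are non-negative \cite{Prussing86}. Applied to $W$, this says precisely that $W\succeq 0 \iff \det(W_i)\ge 0$ for every principal submatrix $W_i$, $i\in\{1,\dots,2^n-1\}$, which is the constraint set \eqref{eq:det} of Model \ref{mod:sdpc}. Hence a point is feasible for Model \ref{mod:sdp} if and only if it is feasible for Model \ref{mod:sdpc}, the feasible regions coincide, and the two problems are strictly equivalent.

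There is no genuine obstacle here — the statement is essentially a transcription of the principal-minors theorem into the OPF setting — but two small points merit being spelled out in the write-up. First, one must use the \emph{full} collection of $2^n-1$ principal minors, not just the $n$ leading ones: Sylvester's leading-minors criterion certifies positive \emph{definiteness}, whereas the non-strict relation $W\succeq 0$ truly requires every principal minor to be non-negative. Second, the sparsity pattern $W_{ij}=0$ for $(i,j)\notin E$ does not interfere with the argument; it merely causes many of the determinants in \eqref{eq:det} to factor through lower-order minors (a fact exploited later for tree-decomposition and lazy cut generation), but it is immaterial to the equivalence itself. With these observations, the proof reduces to the constraint-matching argument above together with the cited linear-algebra fact.
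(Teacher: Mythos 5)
Your proof is correct and follows exactly the route of the paper, which simply cites the principal-minors characterization of positive semidefiniteness \cite{Prussing86}; you merely make explicit the constraint-by-constraint matching of the two models and the caveat that all $2^n-1$ principal minors (not just the leading ones) are required for the non-strict case. No substantive difference from the paper's argument.
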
¥
\begin{proof}
This is based on the principal minor characterization of positive semidefinite matrices.
\end{proof}
For illustration purposes, Theorem \ref{th:eq} implies that the $3\times 3$ positive semidefinite condition,
 \begin{equation}\label{hermit3}
{W} =
 \begin{bmatrix} 
{W}_{11} &  {W}_{12} &  {W}_{13}\\ 
{W}_{12}^* &  {W}_{22} &  {W}_{23}\\ 
{W}_{13}^* &  {W}_{23}^* &  {W}_{33}
\end{bmatrix}
\succeq 0
\end{equation}
\noindent
is strictly equivalent to the system of constraints,
$$\begin{cases}\label{det3}
(\ref{eq:socp:pos})-(\ref{eq:socp:det}),~ (i,j) \in \{(1,2),(1,3),(2,3)\},\nonumber\\
\det({W} ) \ge 0
\end{cases}$$

Given the strict equivalence highlighted in Theorem \ref{th:eq} and since the SDP constraint
\eqref{eq:sdp} defines a convex feasibility set, it follows that the
determinant constraints also define a convex feasible region.
Obviously, the number of these constraints is exponential in the size of the matrix.
Nevertheless, since a submatrix of a positive
semidefinite matrix also needs to be positive semidefinite, one can
add a subset of these exponentially many constraints and still define
a convex region. Moreover, a lazy constraint generation approach can be implemented here.
\begin{remark}
  Given a submatrix of dimension $n$, the convexity of the feasible
  region is only guaranteed if all determinant constraints of lower
  dimensions are also added simultaneously, i.e., $1,\dots,n-1$. For
  instance, one needs to include the SOCP constraints
  (\ref{eq:socp:pos})-(\ref{eq:socp:det}) before including
  three-dimensional determinant constraints.
\end{remark}

\subsection{Non-convex representation of a convex region}
Lasserre \cite{Lasserre10,Lasserre11} nicely pointed out that, 
under a mild nondegeneracy condition, if the feasible region is convex, 
even though its algebraic representation is not, one can still guarantee 
convergence to a global minimizer when using a logarithmic barrier function.

\begin{remark}
  Since the lower bound on the voltage magnitude is strictly positive,
  we have $W_{ii} > 0 (i \in N)$ which is sufficient to
  guarantee that the gradient of these determinant constraints is
  non-zero, thus meeting the nondegeneracy condition stated in
  \cite{Lasserre10,Lasserre11}.
\end{remark}
This result is very powerful in the current framework, as we have proved that 
the feasible region is convex, and the non-degeneracy condition applies, 
thus Model \ref{mod:sdpc} can be solved to global optimality using open-source
 state-of-the-art interior point algorithms which implement a logarithmic barrier function, e.g. Ipopt \cite{Ipopt}.

\subsection{Tree Decomposition}

Recently, Madani et al \cite{Madani_14} showed that one can replace
the high-dimensional SDP constraint \eqref{eq:sdp} by a set of
low-dimensional SDP constraints based on a tree-decomposition of the
power network. Let $\mathcal T = \left\{\mathcal N_t, \mathcal
  E_t\right\}$ denote such a decomposition. A node $n_t \in \mathcal
N_t$ corresponds to a \emph{bag} of nodes $\mathcal B_i \subseteq
\mathcal N$ in the original network. The main result can be stated as
follows,
\begin{theorem}[\cite{Madani_14}]
\begin{equation}
W \succeq 0 \equiv W_{\mathcal B_i} \succeq 0,~\forall \mathcal B_i \in \mathcal N_t
\end{equation}
\end{theorem}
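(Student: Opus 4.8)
The plan is to prove the two set inclusions separately: one direction is immediate, and the other is the positive semidefinite matrix completion theorem for chordal graphs. For the ($\Rightarrow$) direction, note that each $W_{\mathcal B_i}$ is a principal submatrix of $W$, and every principal submatrix of a positive semidefinite matrix is positive semidefinite; hence $W \succeq 0$ forces $W_{\mathcal B_i}\succeq 0$ for every bag $\mathcal B_i \in \mathcal N_t$. The content of the statement is therefore the converse, where ``$\equiv$'' is understood as equivalence of the two SDP formulations: since in Model~\ref{mod:sdp} only the entries $W_{ii}$ and $W_{ij},\ (i,j)\in E$, enter the power-flow equations and the bound constraints, and each such index pair lies inside some bag, asserting $W\succeq 0$ is to be read as the existence of a positive semidefinite matrix whose specified entries match these data.

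For the ($\Leftarrow$) direction I would first make precise the role of the decomposition. Turning every bag $\mathcal B_i$ into a clique produces a graph $G^+$ on $\mathcal N$ that is \emph{chordal} and contains the network graph $(N,E)$ as a subgraph, and whose maximal cliques form a subfamily of $\{\mathcal B_i\}$. The data $\{W_{ii}\}\cup\{W_{ij}\}_{(i,j)\in E}$, viewed as a partial Hermitian matrix with specified pattern inside $G^+$, is then exactly the object governed by the Grone--Johnson--S\'a--Wolkowicz completion theorem: such a partial matrix admits a positive semidefinite completion if and only if each of its fully specified principal submatrices — i.e. each clique block, hence each $W_{\mathcal B_i}$ — is positive semidefinite. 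Granting this, $W_{\mathcal B_i}\succeq 0$ for all $i$ yields a completion, and that completion agrees with all the physically relevant entries, establishing equivalence of the two models and of their optimal values.

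To establish the completion theorem in this setting I would argue by induction on the tree $\mathcal T$. Root $\mathcal T$ arbitrarily and pick a leaf bag $\mathcal B_\ell$ with parent bag $\mathcal B_p$ and separator $\mathcal S = \mathcal B_\ell \cap \mathcal B_p$. By the running intersection property the vertices of $\mathcal B_\ell\setminus\mathcal S$ lie in no other bag, so the only constraint on their rows and columns is $W_{\mathcal B_\ell}\succeq 0$; a one-step completion lemma lets me fill the entries between $\mathcal B_\ell\setminus\mathcal S$ and $\mathcal N\setminus\mathcal B_\ell$ while keeping positive semidefiniteness, using that $W_{\mathcal B_\ell}\succeq 0$ and that $W_{\mathcal S}$ is the common, positive semidefinite overlap. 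Deleting $\mathcal B_\ell\setminus\mathcal S$ leaves a tree decomposition with one fewer bag whose bag submatrices are still positive semidefinite, and the induction closes; the base case of a single bag is a tautology.

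The main obstacle is the one-step completion lemma: a partial Hermitian matrix missing exactly one conjugate pair of off-diagonal entries, all of whose fully specified principal submatrices are positive semidefinite, has a positive semidefinite completion. I would prove it by permuting the missing entry into the two corners, taking the Schur complement with respect to the interior block (which may be assumed positive definite by an $\epsilon$-perturbation limiting argument, comfortably available here since the strict voltage bounds give $W_{ii}>0$), and observing that the two given sub-conditions say precisely that the two diagonal blocks of the resulting $2\times 2$ Schur complement are positive semidefinite; choosing the free entry so that the off-diagonal block of that Schur complement vanishes then produces the desired completion. This single computation drives the induction, and everything else is bookkeeping with the running intersection property.
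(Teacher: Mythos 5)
The paper offers no proof of this statement---it is imported verbatim from Madani et al.\ \cite{Madani_14}---and your argument is exactly the standard one underlying that result: turning the bags into cliques yields a chordal fill-in of the network graph, and the Grone--Johnson--S\'a--Wolkowicz positive semidefinite completion theorem, proved by induction on the clique tree with a one-step Schur-complement completion across each separator, supplies the nontrivial direction, while the forward direction is just heredity of positive semidefiniteness to principal submatrices. Your sketch is correct, and you rightly flag that ``$\equiv$'' must be read as existence of a PSD completion of the partial matrix (only $W_{ii}$ and $W_{ij}$ for $(i,j)\in E$ are constrained by the model) rather than as positive semidefiniteness of the zero-filled matrix in \eqref{hermit}; the one presentational slip is the order of your induction---you should first complete the matrix on $\mathcal N\setminus(\mathcal B_\ell\setminus\mathcal S)$ by the inductive hypothesis and only then glue in the leaf bag across the separator, since the entries between $\mathcal B_\ell\setminus\mathcal S$ and the remainder cannot be chosen before the remainder is known.
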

\noindent
The tree-width of a graph is equal to the cardinality of the biggest
bag minus one.  The authors demonstrate that standard benchmarks have
a low tree-width and thus the dimension of the underlying SDP
matrices can be reduced accordingly, e.g., the tree width on the IEEE
118 bus benchmark is 4. This property should also hold for real-world
grids that tend to be sparse due to the high cost of line
installations.

This nice result leverages the efficiency of Model \ref{mod:sdpc}, as the 
number of determinant cuts to be generated reduces dramatically.

\section{Cycle Constraints}\label{sec:cycle}

This section establishes a connection between the SDP relaxation and
Kirchhoff's voltage law, which can be
viewed as a set of cycle-based equations:
\begin{equation}
\sum_{\substack{(i,j)\in \mathcal C}} (V_i - V_j) = 0,~ \forall \mathcal C \in \mathcal G.\label{eq:cycle}
\end{equation}
\noindent
where $\mathcal C$ denotes a cycle and $\mathcal G$ is a collection of
cycles forming a cycles basis of the graph $(N,E)$.
Kocuk et al. \cite{Kocuk_15} recently proposed a representation of
these cycle constraints in the $W$ space for three and four
dimensional cycles.  For higher dimensional cycles, the authors
propose to introduce artificial edges and corresponding variables,
leading to three and four-cycle decompositions of the cycle basis.  In
what follows, we present a new formulation of the cycle constraints
projected in the $W$ space, which applies for any cycle dimension and
does not require artificial edges/variables.

\subsection{Cycle Constraints in the ${W}$ Space}

\begin{definition}
Given a graph $(N,E)$, a shortest undirected path starting at node
  $i \in N$ and reaching node $j \in N$ represents the minimal set of
  undirected edges linking $i$ to $j$ in $E \cup E^r$. 
\end{definition} 

In general, there can be several shortest paths linking two nodes. In this work,
without loss of generality, it suffices to consider one of these paths which we
denote $\mathcal P_{ij}$. For instance, consider the graph depicted in Figure \ref{graph},
we have $P_{11} = \emptyset,~\mathcal P_{12} = \{(1,2)\},~\mathcal
P_{13} = \{(1,2);(2,3)\},~ \mathcal P_{31} = \{(3,2);(2,1)\}$ and
$\mathcal P_{41} = \{(4,5);(5,1)\}$. We will use $\mathcal
N\left(\mathcal P\right)$ to denote the set of nodes in $\mathcal P$.

\begin{definition}
  The set of nodes separating $i$ from $j$ in $\mathcal P_{ij}$,
  denoted by $\mathcal S_{ij}$ is defined as
$$\mathcal S_{ij} = \mathcal N\left(\mathcal P_{ij}\right) \setminus \{i,j\}.$$
Moreover, given a cycle $\mathcal C$ containing node $i$, we define 
$$\mathcal S_i(\mathcal C) = \{i\} \cup \left(\bigcup\limits_{j \in \mathcal N\left(\mathcal C\right)} \mathcal S_{ij}\right)$$
where $\mathcal N\left(\mathcal C\right)$ denotes the set of nodes in cycle $\mathcal C$.
\end{definition}	

\begin{figure}[h!]
  \centering
    \includegraphics[scale=0.5]{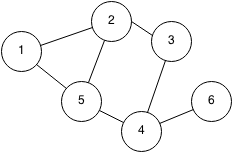}
  \caption{An Example Graph for Cycle Constraints. \label{graph}}
\end{figure}

For instance, consider the graph in \mbox{Figure \ref{graph}}
and the cycle $\mathcal C = \{(1,2);(2,3);(3,4);(4,5);(5,1)\}$. 
If we consider paths starting from node $1$, we have 
$\mathcal S_{11} = \mathcal S_{12} = \mathcal S_{15} =
\emptyset,~ \mathcal S_{13} = \{2\},~\text{and } \mathcal
S_{14} = \{5\}$. Therefore,  $\mathcal S_1(\mathcal C) = \{1,2,5\}$.
If we take node $2$ to be the source, we have $\mathcal S_{22} =
 \mathcal S_{21} = \mathcal S_{25} = \mathcal S_{23} =\emptyset,
 ~\text{and }  \mathcal S_{24} = \{3\}$. This leads to $\mathcal S_2(\mathcal C) = \{2,3\}$.


\begin{theorem}\label{th1}
  Given a cycle $\mathcal C$ and an arbitrary reference node $r$ in
  $\mathcal C$, Kirchhoff's Voltage Law can be expressed as
\begingroup
\small
\begin{equation}\label{loop}
\sum_{\substack{(i,j)\in \mathcal C}} \frac{\left(\prod\limits_{(k,l) \in \mathcal P_{rj}}W_{kl}^*\right) \left(\prod\limits_{s \in \mathcal S_r(\mathcal C)\setminus \mathcal N\left(\mathcal P_{rj}\right)}w_{s}\right)\left(W_{ij} - w_j\right)}{w_j} = 0
\end{equation}
\endgroup
\end{theorem}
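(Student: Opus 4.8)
The plan is to recognise \eqref{loop} as Kirchhoff's Voltage Law \eqref{eq:cycle} pushed into the lifted $W$-space, so that one direction is a substitution/telescoping computation on rank-one points and the other reads the edge phases off the $2\times2$ minor constraints already present in Model \ref{mod:sdpc}.

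For the main direction I would start from the identity $\sum_{(i,j)\in\mathcal C}(V_i-V_j)=0$, which is exactly \eqref{eq:cycle} restricted to $\mathcal C$ and holds by telescoping, and replace every factor by its $W$-space expression. The workhorse is a path-telescoping lemma: on any rank-one point, a shortest path $\mathcal P_{rj}$ from the reference $r$ to a node $j$ satisfies $\prod_{(k,l)\in\mathcal P_{rj}}W_{kl}^{*}=\big(\prod_{s\in\mathcal S_{rj}}w_s\big)V_jV_r^{*}$, which one proves by writing $W_{kl}^{*}=V_lV_k^{*}$ and cancelling the squared voltage $w_s=V_sV_s^{*}$ at every interior vertex $s$ of the path. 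This realises the ``virtual'' Hermitian entry $W_{rj}=V_rV_j^{*}$ between non-adjacent nodes as $\big(\prod_{(k,l)\in\mathcal P_{rj}}W_{kl}^{*}\big)/\prod_{s\in\mathcal S_{rj}}w_s$, in terms of genuine variables and without artificial edges --- the improvement over \cite{Kocuk_15}. Combined with $W_{ij}-w_j=(V_i-V_j)V_j^{*}$ and $w_j=V_jV_j^{*}$, the $(i,j)$-summand of \eqref{loop} then collapses, after cancellation of $V_j$, to $V_r^{*}\big(\prod_{s\in M}w_s\big)(V_i-V_j)$ for a single $w$-monomial $M$; summing over $\mathcal C$ and factoring out the common $V_r^{*}\prod_{s\in M}w_s$ leaves $\sum_{(i,j)\in\mathcal C}(V_i-V_j)=0$.

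I expect the main obstacle to be exactly this ``factoring out'' step: one has to choose the $w$-products so that the residual monomial $M$ is the \emph{same} set for every edge of $\mathcal C$, which forces $M=\bigcup_{j\in\mathcal N(\mathcal C)}\mathcal S_{rj}=\mathcal S_r(\mathcal C)\setminus\{r\}$ and thereby pins down which interior-path vertices --- and, for the closing edge where $\mathcal P_{rr}=\emptyset$ and the path product is empty, also the reference vertex $r$ itself --- must be carried in each summand's $w$-product; this is the content of the index set $\mathcal S_r(\mathcal C)\setminus\mathcal N(\mathcal P_{rj})$ in the statement, and the case requiring care is when a vertex of $\mathcal C$ is simultaneously an interior vertex of some shortest path. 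I would also observe that the manipulations are legitimate because every $w_i$ appearing in a denominator is strictly positive, $w_i\ge\underline{\bm v}_i^{2}>0$ by \eqref{eq:wbound}, and that after clearing these denominators \eqref{loop} is a polynomial equality, hence usable as a ``cut'' in the same spirit as the determinant cuts of Section \ref{sec:sdp}.

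For the converse I would use that on any point satisfying \eqref{mat:socp} each edge variable may be written $W_{ij}=v_iv_j\rho_{ij}e^{\bm i\phi_{ij}}$ with $\rho_{ij}\in[0,1]$, substitute this into \eqref{loop}, and verify that the resulting equation forces $\sum_{(i,j)\in\mathcal C}\phi_{ij}\equiv0\pmod{2\pi}$ --- i.e.\ \eqref{eq:cycle} in angular form --- which is the standard statement of Kirchhoff's Voltage Law and closes the equivalence.
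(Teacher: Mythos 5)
Your route is the same as the paper's: start from the telescoping identity $\sum_{(i,j)\in\mathcal C}(V_i-V_j)=0$, use $(V_i-V_j)V_j^*=W_{ij}-w_j$ and $1/V_j^*=V_j/w_j$, multiply through by a monomial in $V_r^*$ and the squared voltage magnitudes, and absorb the result into path products of $W$ entries. Your explicit path--telescoping lemma, $\prod_{(k,l)\in\mathcal P_{rj}}W_{kl}^{*}=\bigl(\prod_{s\in\mathcal S_{rj}}w_s\bigr)V_jV_r^{*}$, is correct and is exactly what is needed to make precise the paper's final step, which the paper asserts in one line. Your proposed ``converse'' via the SOCP polar decomposition is unnecessary and off-target: the theorem is an identity on lifted (rank-one) points, and the paper's argument is a chain of reversible equivalences (each step invertible because $w_j>0$ and $V_j\neq 0$); no separate converse involving $\rho_{ij}\in[0,1]$ is called for.

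The genuine gap is precisely at the step you flag and then wave through: the claim that the residual $w$-monomial is the same for every edge of $\mathcal C$. Carrying out your own reduction, the $(i,j)$-summand of \eqref{loop} collapses on a rank-one point to $V_r^*\bigl(\prod_{s\in M_j}w_s\bigr)(V_i-V_j)$ with $M_j=\mathcal S_{rj}\cup\bigl(\mathcal S_r(\mathcal C)\setminus\mathcal N(\mathcal P_{rj})\bigr)=\mathcal S_r(\mathcal C)\setminus\{r,j\}$, which \emph{does} depend on $j$ whenever a cycle node $j$ lies in $\mathcal S_r(\mathcal C)$ --- exactly the ``case requiring care'' you mention, and it occurs for every cycle of length at least four. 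On the paper's own five-node example with $r=1$ one has $\mathcal S_1(\mathcal C)=\{1,2,5\}$ with $2,5\in\mathcal N(\mathcal C)$, and the left-hand side of \eqref{loop} evaluates to
\begin{equation*}
V_1^*\Bigl(w_5(1-w_2)(V_1-V_2)+w_2(1-w_5)(V_4-V_5)\Bigr),
\end{equation*}
which is not identically zero. So the uniform factor you need does not exist under the stated definitions; your sketch does not close this step, and neither does the paper: its passage from the line containing $V_r^*\bigl(\prod_{k\in\mathcal S_r(\mathcal C)}V_kV_k^*\bigr)V_j$ to the final line requires $V_r^*\bigl(\prod_{k\in\mathcal S_r(\mathcal C)}w_k\bigr)V_j=\bigl(\prod_{(k,l)\in\mathcal P_{rj}}W_{kl}^*\bigr)\bigl(\prod_{s\in\mathcal S_r(\mathcal C)\setminus\mathcal N(\mathcal P_{rj})}w_s\bigr)$, and the two sides differ by $w_r$ (and by $w_rw_j$ when $j\in\mathcal S_r(\mathcal C)$). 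The identity does hold for three-cycles, where no cycle node is path-interior and all $M_j$ are empty, which is why the paper's worked example goes through. To repair the general case one must either redefine the per-edge index sets so that every summand carries the full monomial $\prod_{s\in\mathcal S_r(\mathcal C)\setminus\{r\}}w_s$, or restrict the theorem to cycles where $\mathcal N(\mathcal C)\cap\mathcal S_r(\mathcal C)=\{r\}$; your proof attempt should state and prove whichever correction it adopts rather than leaving the uniformity of $M$ as an assertion.
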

\begin{proof}
\small
The proof uses the following two identities:
\begin{align}
&(V_i - V_j)V_j^* = W_{ij} - w_j \label{eq:id1}\\
&\frac{1}{V_j^*} = \frac{V_j}{w_j}\label{eq:id2}
\end{align}
\begingroup
\small
\begin{gather*}
\sum_{\substack{(i,j)\in \mathcal C}} (V_i - V_j) = 0\\
\Updownarrow\\
\sum_{\substack{(i,j)\in \mathcal C}} \frac{(V_i - V_j)V_j^*}{V_j^*} = 0\\
\Updownarrow \eqref{eq:id1}\\
\sum_{\substack{(i,j)\in \mathcal C}} \frac{W_{ij} - w_j}{V_j^*} = 0\\
\Updownarrow\eqref{eq:id2}\\
\sum_{\substack{(i,j)\in \mathcal C}} \frac{V_j(W_{ij} - w_j)}{w_j} = 0\\
\Updownarrow\\
V_{r}^*\left( \prod\limits_{k \in \mathcal S_r(\mathcal C)}V_k{V_k}^* \right)\sum_{\substack{(i,j)\in \mathcal C}} \frac{V_j(W_{ij} - w_j)}{w_j} = 0\\
\Updownarrow\\
\sum_{\substack{(i,j)\in \mathcal C}} \frac{V_{r}^*\left( \prod\limits_{k \in \mathcal S_r(\mathcal C)}V_k{V_k}^* \right)V_j(W_{ij} - w_j)}{w_j} = 0\\
\Updownarrow \eqref{eq:lift1},\eqref{eq:lift}\\
\sum_{\substack{(i,j)\in \mathcal C}} \frac{\left(\prod\limits_{(k,l) \in \mathcal P_{rj}}W_{kl}^*\right) \left(\prod\limits_{s \in \mathcal S_r(\mathcal C)\setminus \mathcal N\left(\mathcal P_{rj}\right)}w_{s}\right)(W_{ij} - w_j)}{w_j} = 0.
\end{gather*}
\endgroup
\end{proof}

\noindent
For illustration purposes, consider a three-bus cycle
$\mathcal C = \{(1,2),(2,3),(3,1)\}$. It is easy to check that $\mathcal P_{11} = \emptyset$, $\mathcal
P_{12} = \{(1,2)\}$, $\mathcal P_{13} = \{(1,3)\}$, leading to $\mathcal S_{ij} = \emptyset,~\forall (i,j) \in \mathcal C$, and $\mathcal S_1(\mathcal C) = \{1\}$.  Taking node
$1$ to be the reference node and applying Theorem \ref{th1}, we have,
\begin{gather*}
  \sum_{\substack{(i,j)\in \mathcal C}} \frac{\left(\prod\limits_{(k,l) \in \mathcal P_{1j}}W_{kl}^*\right) \left(\prod\limits_{s \in \mathcal S_1(\mathcal C)\setminus \mathcal N(\mathcal P_{1j})}w_{s}\right)(W_{ij} - w_j)}{w_j} = 0\\
\Downarrow\\
\frac{W_{12}^* (W_{12} - w_2)}{w_2} + \frac{W_{13}^* (W_{23} - w_3)}{w_3} + w_1\frac{W_{31} - w_1}{w_1} = 0\\
\Downarrow\\
\frac{|W_{12}|^2 - W_{12}^*w_2}{w_2} + \frac{W_{13}^* (W_{23} - w_3)}{w_3}  + W_{31} - w_1= 0\\
\Downarrow\\
\frac{w_{1}w_{2} - W_{12}^*w_2}{w_2} + \frac{W_{13}^* (W_{23} - w_3)}{w_3}  + W_{31} - w_1= 0\\
\Downarrow\\
w_{1} - W_{12}^* + \frac{W_{13}^*W_{23} }{w_3} -  W_{13}^* + W_{31} - w_1= 0\\
\Downarrow\\
W_{13}^{*}W_{23} = w_{3}W_{12}^{*}.
\end{gather*}
In real number representation, this is equivalent to
\begin{align}
w^R_{31}w^R_{23} - w^I_{31}w^I_{23} &= w_3w^R_{12},\label{eq:cycle_r}\\
w^I_{31}w^R_{23} + w^R_{31}w^I_{23} &= -w_3w^I_{12}.\label{eq:cycle_i}
\end{align}
Note that one can generate another system of equations by setting the reference node to be $2$ before applying Theorem \ref{th1}, which leads to
\begin{align}
w^R_{31}w^R_{12} + w^I_{31}w^I_{12} &= w_1w^R_{23},\label{eq:cycle2_r}\\
w^I_{31}w^R_{12} - w^R_{31}w^I_{12} &= -w_1w^I_{23}.\label{eq:cycle2_i}
\end{align}
\subsection{Linking the SDP Relaxation and Kirchhoff's Voltage Law}

Consider the $3 \times 3$ matrix defined in \eqref{hermit3}, the determinant constraint is written,
\begingroup
\small
\begin{gather*}
det({W} ) \ge 0\\
\Updownarrow\\
 - W_{12}\left(W_{12}^*W_{33} - W_{23}W_{13}^*\right) + W_{22} \left(W_{11}W_{33} - W_{13}W_{13}^*\right)\\
- W_{23}^*\left(W_{11}W_{23} - W_{13}W_{12}^*\right) \ge 0\\
\Updownarrow\\
 - |W_{12}|^2W_{33} + W_{12}W_{23}W_{13}^* + W_{22}W_{11}W_{33} - |W_{13}|^2W_{22}\\
- |W_{23}|^2W_{11} + \left(W_{23}W_{13}^*W_{12}\right)^* \ge 0\\
\Updownarrow\\
 2\mathcal R\left(W_{12}W_{23}W_{13}^*\right) + W_{11}W_{22}W_{33}\\
 \ge  |W_{12}|^2W_{33} + |W_{13}|^2W_{22} + |W_{23}|^2W_{11}\\
\Updownarrow\\
 2\left(w_{12}^R\left(w_{23}^Rw_{31}^R - w_{23}^Iw_{31}^I\right) - w_{12}^I\left(w_{23}^Iw_{31}^R + w_{23}^Rw_{31}^I\right)\right)\\
 \ge  |W_{12}|^2W_{33} + |W_{13}|^2W_{22} + |W_{23}|^2W_{11} - W_{11}W_{22}W_{33}.
\end{gather*}
\endgroup
Now, observe that the linear combinations of the cycle constraints
\begin{equation*}
 w^R_{12}\eqref{eq:cycle_r}+w^I_{12}\eqref{eq:cycle_i} \text{ and } w^R_{23}\eqref{eq:cycle2_r}+w^I_{23}\eqref{eq:cycle2_i}
\end{equation*}
lead to
\begingroup
\small
\begin{align}
w_{12}^R\left(w_{23}^Rw_{31}^R - w_{23}^Iw_{31}^I\right) - w_{12}^I\left(w_{23}^Iw_{31}^R + w_{23}^Rw_{31}^I\right) = |W_{12}|^2W_{33}\label{combi1}\\
w_{12}^R\left(w_{23}^Rw_{31}^R - w_{23}^Iw_{31}^I\right) - w_{12}^I\left(w_{23}^Iw_{31}^R + w_{23}^Rw_{31}^I\right) = |W_{23}|^2W_{11}\label{combi2}
\end{align}
\endgroup
which represent the main components of the SDP determinant constraint presented previousely. 
Furthermore, based on \eqref{eq:socp:det}, one has,
\begin{gather}
W_{11}W_{33} \ge |W_{13}|^2\nonumber\\
\Updownarrow\nonumber\\
W_{11}W_{22}W_{33} \ge W_{22}|W_{13}|^2\label{w123}
\end{gather}

\noindent
It is easy to show that \eqref{combi1} + \eqref{combi2} + \eqref{w123}
$\equiv$ \eqref{eq:det}.  This establishes the fact that, for
three-dimensional cycles, the SDP formulation captures a relaxed
version of the cycle constraints
\eqref{eq:cycle_r}-\eqref{eq:cycle2_i} combined with the SOCP
constraints \eqref{mat:socp}.

\section{Numerical Experiments}\label{sec:numerical}

This section presents a preliminary computational evaluation of the
polynomial SDP cuts.

In the current experiments, only three-dimensional SDP cuts were generated. Implementation to handle higher dimension matrices is ongoing work.

The relaxations were compared on a subset of the NESTA v0.5.0
\cite{nesta} benchmarks. NESTA is a comprehensive library including
state-of-the-art AC-OPF transmission system test cases ranging from 3
to 9000 nodes and consist of 35 different networks under three modes:
a typical operating condition (TYP), a congested operating condition
(API), small angle difference condition (SAD) and radial
configurations (RAD).  In our preliminary experiments, we focus on
small and medium size meshed instances of up to 300 nodes under
typical operating conditions.  Nonlinear models were solved using
IPOPT 3.12 \cite{Ipopt} with linear solver ma27 \cite{hsl_lib}.  The
SDP relaxation was executed on the state-of-the-art implementation
\cite{SDP} which already exploits the branch decomposition theorem
\cite{opfBranchDecomp}. The SDP solver SDPT3 4.0 \cite{Toh99sdpt3} was
used with the modifications suggested in \cite{SDP}. Note that the
phase angle bounds defined in Model \ref{mod:sdp} were also introduced
in the SDP formulation.  All instances were ran on a \amdq.

IPOPT \cite{Ipopt} is also used as a heuristic to find a feasible
solution to the AC-OPF problem, providing an upper bound value on the
optimal objective.  We then measure the {\em optimally gap} between
the heuristic and the relaxation using the formula
\begin{align}
\frac{\text{Heuristic - Relaxation}}{\text{Heuristic}} \nonumber
\end{align}

Table \ref{tbl:typ} and \ref{tbl:typ2} present the results in terms of
optimality gap and computational time, comparing the new polynomial
SDP cut formulation (P-SDP) with the original SDP model and the
standard SOCP relaxation respectively. Let us emphasize that we only
generate 3-dimensional determinant cuts in these experiments and that
the gap can be further reduced by generating cuts with higher
dimensions.  It is interesting to note that these 3-dimension cuts are
already reducing the optimality gap from $2.04\%$ to $0.68\%$ on
average, when compared to the SOCP relaxation. The gap reduction is
substantial on case\_30\_ieee, dropping from $15.88\%$ to
$0.06\%$. The computational time results are also very promising, the
new formulation is on average one order of magnitude faster than the
SDP solver approach.

\begin{table}[h!]
\small
\center
\caption{Comparing SDP with P-SDP on TYP instances}
\vspace{-0.2cm}
\begin{tabular}{|r||r|r||r|r|}
\hline
                  & \multicolumn{2}{c||}{Opt. Gap (\%)} & \multicolumn{2}{c|}{Runtime (seconds)} \\
Test Case & SDP & P-SDP & SDP & P-SDP \\
\hline
\hline
  case3\_lmbd 	&	0.39	&	\bf 0.39	&	3.83	&	0.02	\\
 case4\_gs 	&	 0.00  	&	 \bf 0.00	&	4.00	&	0.03	\\
 case5\_pjm 	&	5.22	&	\bf 5.22	&	4.43	&	0.05	\\
 case6\_c 	&	0.00	&	\bf 0.00	&	4.31	&	0.05	\\
 case6\_ww 	&	 0.00  	&	0.62	&	4.54	&	0.04	\\
 case9\_wscc 	&	 0.00	&	\bf 0.00	&	3.95	&	0.20	\\
 case14\_ieee 	&	  0.00  	&	\bf 0.00	&	4.11	&	0.10	\\
 case24\_ieee\_rts 	&  0.00  	&	0.01	&	5.54	&	0.13	\\
 case29\_edin 	&	 0.00  	&	0.08	&	8.12	&	0.39	\\
 case30\_as 	&	0.00	&	\bf0.00	&	5.84	&	0.16	\\
 case30\_fsr 	&	0.00	&	0.37	&	6.04	&	0.15	\\
 case30\_ieee 	&  0.00  	&	0.06	&	5.56	&	0.26	\\
 case39\_epri 	&	0.01	&	 0.05$^\star$ 	&	6.90	&	0.29	\\
 case57\_ieee 	&	 0.00  	&	0.06	&	8.08	&	0.49	\\
 case73\_ieee\_rts 	&	0.00  	&	0.03	&	8.83	&	0.67	\\
 case89\_pegase 	&	0.00	&	0.17	&	18.79	&	2.44	\\
 case118\_ieee 	&	0.06	&	1.51	&	12.77	&	1.32	\\
 case162\_ieee\_dtc 	&	1.08	&	4.02	&	35.28	&	1.34	\\
 case189\_edin 	&	0.07	&	0.17	&	12.95	&	1.05	\\
 case300\_ieee 	&	0.08	&	0.93	&	27.70	&	2.66	\\
  \hline
 \bottomrule  
 Average 	&	0.34	&	0.68	&	9.57	&	0.59	\\
 \toprule
\end{tabular}\\
\vspace{0.3cm}
{\bf bold} - the polynomial cuts match the SDP gap,\\$\star$ - solver reported numerical accuracy warnings
\label{tbl:typ}
\end{table}

\begin{table}[h!]
\small
\center
\caption{Comparing P-SDP with SOCP on TYP instances}
\vspace{-0.2cm}
\begin{tabular}{|r||r|r||r|r|}
\hline
                  & \multicolumn{2}{c||}{Opt. Gap (\%)} & \multicolumn{2}{c|}{Runtime (seconds)} \\
Test Case & SOCP & P-SDP & SOCP & P-SDP \\
\hline
\hline
nesta\_case3\_lmbd 	&	1.32	&	\bf0.39	&	 0.09  &	0.02\\
								
case4\_gs 	&	0	&	\bf0.00	&	 0.04  &	0.03\\
								
case5\_pjm 	&	14.54	&	\bf5.22	&	 0.06  &	0.05\\
								
case6\_c 	&	0.3	&	\bf0.00	&	 0.14  &	0.05\\
								
case6\_ww 	&	0.63	&	\bf0.62	&	 0.10  &	0.04\\
								
case9\_wscc 	&	0	&	\bf0.00	&	 0.05  &	0.2\\
								
case14\_ieee 	&	0.11	&	\bf0.00	&	 0.07  &	0.1\\
								
case24\_ieee\_rts 	&	0.01	&	\bf0.01	&	 0.08  &	0.13\\
								
case29\_edin 	&	0.12	&	\bf0.08	&	 0.18  &	0.39\\
								
case30\_as 	&	0.06	&	\bf0.00	&	 0.09  &	0.16\\
								
case30\_fsr 	&	0.39	&	\bf0.37	&	 0.08  &	0.15\\
								
case30\_ieee 	&	15.88	&	\bf0.06	&	 0.07  &	0.26\\
								
case39\_epri 	&	0.05	&	 \bf0.05$^\star$ 	&	 0.15  &	0.29\\
								
case57\_ieee 	&	0.06	&	\bf0.06	&	 0.14  &	0.49\\
								
case73\_ieee\_rts 	&	0.03	&	\bf0.03	&	 0.24  &	0.67\\
								
case89\_pegase 	&	0.17	&	\bf0.17	&	 0.34  &	2.44\\
								
case118\_ieee 	&	1.83	&	\bf1.51	&	 0.20  &	1.32\\
								
case162\_ieee\_dtc 	&	4.03	&	\bf4.02	&	 0.30  &	1.34\\
								
case189\_edin 	&	0.21	&	\bf0.17	&	 0.37  &	1.05\\
								
case300\_ieee 	&	1.18	&	\bf0.93	&	 0.50  &	2.66\\
  \hline
 \bottomrule  
 Average 	&	2.04	&	\bf 0.68	&	0.16	&	0.59	\\
 \toprule
\end{tabular}\\
\vspace{0.3cm}
{\bf bold} - smaller optimality gap,\\$\star$ - solver reported numerical accuracy warnings
\label{tbl:typ2}
\end{table}

\section{Conclusion} 
\label{sec:conc}

The polynomial SDP cuts introduced in this paper have a great
potential in increasing the scalability of semidefinite programming
approaches to tackle complex optimisation problems in power systems.
The computational time improvement can have a huge impact when the
model includes discrete variables and branching becomes mandatory. The
time reduction then becomes a factor of the number of nodes explored
in the branch \& bound tree.

Further implementation work is needed to automatically generate the
SDP determinant cuts for matrices with dimensions higher than three.
The key idea would be to generate the cuts lazily using a separation
algorithm. Such an approach would start by solving the SOCP
relaxation, identifying a submatrix violating the determinant
constraint, and adding the corresponding cut to the model. The process
is then iterated until the duality gap is small enough or the matrix
is shown to be positive semidefinite.


%

\section*{Acknowledgment}
This work was conducted at NICTA and is funded by the
  Australian Government as represented by the Department of Broadband,
  Communications and the Digital Economy and the Australian Research
  Council through the ICT Centre of Excellence program.

\ifCLASSOPTIONcaptionsoff
  \newpage
\fi





\end{document}